\newtheorem{theorem}{Theorem}[section]
\newtheorem{lemma}[theorem]{Lemma}
\newtheorem{proposition}[theorem]{Proposition}
\theoremstyle{definition}
\theoremstyle{remark}
\newtheorem{remark}[theorem]{Remark}
\newcommand{\kk}{\Bbbk}
\newcommand{\sqf}{\mathrm{sqf}}
\renewcommand{\b}{\bullet}
\title{A DG $\Gamma$-structure on the Generalized Taylor Resolution}
\author[L.~Ferraro]{Luigi Ferraro}
\address{School of Mathematical and Statistical Sciences,
University of Texas Rio Grande Valley, Edinburg, TX 78539, U.S.A.}
\email{luigi.ferraro@utrgv.edu}
\author[R.~ Alvarez]{Raul Alvarez}
\address{School of Mathematical and Statistical Sciences,
University of Texas Rio Grande Valley, Edinburg, TX 78539, U.S.A.}
\email{raul.alvarez03@utrgv.edu}
\thanks{Luigi Ferraro was partly supported by the Simons Foundation grant MPS-TSM-00007849.}
\keywords{polynomial rings, monomial ideals, free resolutions, Taylor resolution, generalized Taylor resolution, divided powers, differential graded algebras, squarefree monomial ideals}
\subjclass[2020]{13D02}
\begin{document}

\begin{abstract}
Free resolutions of ideals in commutative rings provide valuable insights into the complexity of these ideals. In 1966, Taylor constructed a free resolution for monomial ideals in polynomial rings, which Gemeda later showed admits a differential graded (DG) algebra structure. In 2002, Avramov proved that the Taylor resolution admits a DG algebra structure with divided powers.
In 2007, Herzog introduced the generalized Taylor resolution, which is usually smaller than the original Taylor resolution. Recently, in 2023, VandeBogert showed that the generalized Taylor resolution also admits a DG algebra structure. In this paper, we extend Avramov's result by proving that the generalized Taylor resolution admits a DG algebra structure with divided powers. Along the way, we correct a sign error in VandeBogert's product formula. We provide applications concerning homotopy Lie algebras and resolutions of squarefree monomial ideals.
\end{abstract}

\maketitle

\section{Introduction}
Let $R = \kk[x_1, \dots, x_n]$ be a polynomial ring over a field $\kk$, and let $I \subseteq R$ be a monomial ideal. The construction of free resolutions of $R/I$ is a central theme in combinatorial commutative algebra. Among the most well-studied examples is the Taylor resolution \cite{Taylor}, which provides a canonical $\mathbb{Z}^n$-graded free resolution for any monomial ideal. Although rarely minimal, the Taylor resolution is combinatorially explicit and it plays a foundational role in the study of resolutions of monomial ideals.

A natural question in this context is whether one can enhance such resolutions with additional algebraic structure. Differential graded (DG) algebra structures on resolutions have proved particularly fruitful. They were initially studied by Buchsbaum and Eisenbud in \cite{BE}, where they prove that a finite free resolution of a cyclic module always admits a multiplicative structure associative only up to homotopy. In general associative products, i.e. DG algebra structures, on free resolutions of cyclic modules do not exist, see Avramov \cite{obstructions}.

Beyond DG algebras, it is often useful to consider DG algebras equipped with \emph{divided powers}, also known as DG $\Gamma$-algebras. Free resolutions of cyclic modules with a DG $\Gamma$-algebra structure arise naturally in the study of homotopy Lie algebras.

In \cite{Gemeda}, Gemeda showed that the Taylor resolution admits a DG algebra structure via a simple combinatorial product. Later, Avramov \cite{Avramov} established that the Taylor resolution also admits a canonical structure of a DG $\Gamma$-algebra in which the divided powers of basis elements vanish beyond the first power. Avramov used this structure on the Taylor resolution to study the homotopy Lie algebra of rings of the form $R/I$ where $I$ is a monomial ideal.

A generalization of the Taylor complex was introduced by Herzog \cite{herzog2007} as a construction for resolving cyclic modules of the form $R/(I_1 + \cdots + I_r)$, where $I_1,\ldots,I_r$ are monomial ideals, from given resolutions of the cyclic modules $R/I_i$. This \emph{generalized Taylor resolution} coincides with the Taylor resolution whenever each $I_i$ is principal. More recently, VandeBogert \cite{Keller} proved that if the input resolutions admit DG algebra structures, then the generalized Taylor resolution also inherits a DG algebra structure. VandeBogert's product formula on the generalized Taylor resolution was missing a sign, an imprecision that we fix in this paper. 

The existence of a  DG $\Gamma$-structure on the generalized Taylor resolution has not yet been addressed in the literature. The purpose of this article is to fill this gap. Specifically, we prove that the generalized Taylor resolution $F_1 * \cdots * F_r$, formed from DG $\Gamma$-algebra resolutions $F_i$ of $R/I_i$, admits a natural structure of a DG $\Gamma$-algebra. Our construction of the divided powers structure on the generalized Taylor resolution mirrors the construction of the divided powers structure on the tensor product of DG $\Gamma$-algebras. 

The paper is organized as follows. In Section 2 we recall the necessary background information such as the definition of DG $\Gamma$-algebra and the construction of the Taylor resolution and its generalized version. We then give a correct proof of the existence of a DG algebra structure on the generalized Taylor resolution.

Section 3 contains our main result, i.e. the generalized Taylor resolution admits a DG $\Gamma$-algebra structure whenever the input resolutions do. Our proof relies on a general descent lemma for divided powers under DG algebra maps that become isomorphisms after localization. This allows us to transfer the DG $\Gamma$-structure from the tensor products of the input resolutions to the generalized Taylor resolution.

In Section 4, we verify that this construction recovers the canonical DG $\Gamma$-structure on the Taylor resolution in the special case where each $I_i$ is principal and $F_i$ is the Koszul resolution of $R/I_i$. 

In Section 5, we present two applications. First, we show that our construction can be used to compute the homotopy Lie algebra of rings of the form $R/(I_1+\cdots+I_r)$ where $I_1,\ldots, I_r$ are monomial ideals, generalizing a result of Avramov, see \cite[proof of Theorem 1]{Avramov}. Second, in the case of squarefree monomial ideals, we show that if the minimal free resolution of $R/(I_1+\cdots+I_r)$ is a DG $\Gamma$-algebra, then it is isomorphic to a DG $\Gamma$-quotient of the generalized Taylor resolution, provided the input resolutions are also DG $\Gamma$-algebras. This complements a result of VandeBogert \cite[Proposition 4.8]{Keller}, which was a generalization of a result of Katth\"{a}n \cite[Theorem 3.6]{katthan}. We conclude by showing that if the minimal free resolution of a squarefree ideal is a DG $\Gamma$-algebra, then the divided powers structure on the Scarf subcomplex is unique, complementing a result of Katth\"{a}n \cite[Proposition 3.1]{katthan}

\section{Background}

Let $R=\kk[x_1,\ldots,x_n]$ where $\kk$ is a field. We consider $R$ to be $\mathbb{Z}^n$-graded by giving the variable $x_i$ (multi)degree $(0,\ldots,0,1,0,\ldots,0)$, where the $1$ is in position $i$.

\begin{remark}
Every complex over $R$ will be considered to be bigraded. The homological degree of an element $x$ will be denoted by $|x|$, while the internal (multi)degree will be denoted by $\deg x$. For convenience we denote the internal degree of $x$ by the unique monomial with the same $\mathbb{Z}^n$-degree.
\end{remark}

\subsection{DG Algebras} A DG algebra, or differential graded algebra, over $R$ is a complex $A_\bullet$ of finitely generated free $R$-modules with differential $\partial$ and with a unitary, associative multiplication that respects both the homological grading and the internal grading and such that
\begin{enumerate}
\item $\partial(ab)=\partial(a)b+(-1)^{|a|}a\partial(b)$, for homogeneous elements $a,b\in A$,
\item $ab=(-1)^{|a||b|}ba$, for homogeneous elements $a,b\in A$,
\item $a^2=0$, if $a$ is a homogeneous element of odd homological degree.
\end{enumerate}

\subsection{DG $\Gamma$-Algebras}\label{DGGammaDef} We recall the definition of DG $\Gamma$-algebra, or DG algebra with divided powers. A DG algebra $A$ over $R$ is said to be a DG $\Gamma$-algebra if to every bihomogeneous element $x\in A$ of even positive homological degree, there is an associated sequence of elements $x^{(k)}\in A$ for $k=0,1,2,\ldots$ satisfying
\begin{enumerate}
\item $x^{(0)}=1, x^{(1)}=x, |x^{(k)}|=k|x|, \deg x^{(k)}=(\deg x)^k$,
\item $x^{(h)}x^{(k)}=\binom{h+k}{h}x^{(h+k)}$,
\item if $x$ and $y$ have the same bidegree, then
\[
(x+y)^{(k)}=\sum_{i+j=k}x^{(i)}y^{(j)},
\]
\item for $k\geq2$ and $y$ bihomogeneous
\[
(xy)^{(k)}=\begin{cases}
0\quad\text{if $|x|$ and $|y|$ are odd}\\
x^ky^{(k)}\quad\text{if $|x|$ is even and $|y|$ is even and positive},
\end{cases}
\]
\item 
\[
(x^{(h)})^{(k)}=\frac{(hk)!}{k!(h!)^k}x^{(hk)},
\]
\item for $k\geq 1$,
\[
\partial(x^{(k)})=x^{(k-1)}\partial(x),
\]
where $\partial$ is the differential of $A$.
\end{enumerate}

\subsection{The Taylor Resolution} Let $u_1,\ldots,u_r$ be monomials in $R$. Let $T_i=R^{\binom{r}{i}}$ for $i=0, \ldots, r$. A basis of $R^{\binom{r}{i}}$ is given by $\{e_F\mid |F|=i,F\subseteq[r]\}$.
We define maps $\partial_i:T_i\rightarrow T_{i-1}$ for $i=1,\ldots,r$ by 
\[
\partial_i (e_F) = \sum_{i \in F}(-1)^{\alpha(F,i)} \frac{m_F}{m_{F \backslash\{i\}}} e_{F \backslash\{i\}} , 
\]
where $\alpha(F,i)= |\{j \in F : j< i\}|$ and $m_F$ is defined to be the least common multiple of the monomials $u_i$ with $i\in F$. The multidegree of $e_F$ is defined to be $m_F$. Taylor proved in \cite{Taylor} that $(T_\bullet,\partial_\bullet)$ is a free resolution of $R/(u_1,\ldots,u_r)$.

Gemeda proved in \cite{Gemeda} that the Taylor resolution admits a DG algebra structure with the following product
\[ e_{V}e_{W}=
\begin{cases}
(-1)^{\alpha(V,W)}\frac{m_{V}m_{W}}{m_{V \cup W}}e_{V \cup W} & V\cap W = \emptyset\\
0 & V\cap W \neq \emptyset\\
\end{cases}
\]
where $\alpha(V, W)= |\{(i,j) \in V \times W \mid j<i\}|$.

In \cite[Lemma 6]{Avramov}, Avramov proved that the Taylor resolution admits a unique structure of a DG $\Gamma$-algebra where $e_F^{(r)}=0$ for $r\geq2$. The divided powers of a linear combination of basis elements is computed using the axioms of DG $\Gamma$-algebra.

\subsection{The Generalized Taylor Resolution} We now recall the construction of the generalized Taylor resolution. Let $I_1,\ldots, I_r$ be monomial ideals of $R$. Let $F_i$ be a multigraded resolution of $R/I_i$. If $f_i$ is a multigraded element of $F_i$, we denote its multidegree (as a monomial of $R$) as $m_{f_i}$. The differential of $f_i$ can be written as follows
\begin{equation}\label{eq:diffGenTayComps}
\partial^{F_i}(f_i)=\sum_{j=1}^{n_i}\alpha_{i,j}\frac{m_{f_i}}{m_{a_{i,j}}}a_{i,j},
\end{equation}
where $\alpha_{i,j}\in\kk$ and $a_{i,j}$ for $j=1,\ldots,n_i$ is a free basis of $F_i$.

The Generalized Taylor resolution, denoted by $F_1*\cdots* F_r$, is defined as follows. As bigraded $R$-modules $F_1*\cdots *F_r=F_1\otimes_R\cdots\otimes_R F_r$, and we denote a simple tensor $f_1\otimes\cdots\otimes f_r$ as $f_1*\cdots*f_r$ for $f_i\in F_i$. The multidegree of $f_1*\cdots*f_r$ is defined to be $[m_{f_1},\ldots,m_{f_r}]$, where the brackets denote the least common multiple. The differential, on bigraded elements, is defined as follows
\begin{align*}
&\partial^{F_1*\cdots*F_r}(f_1*\cdots*f_r)=\\
&\sum_{i=1}^r(-1)^{|f_1|+\cdots+|f_{i-1}|}\sum_{j=1}^{n_i}\alpha_{i,j}\frac{[m_{f_1},\ldots,m_{f_r}]}{[m_{f_1},\ldots,m_{f_{i-1}},m_{a_{i,j}},m_{f_{i+1}},\ldots,m_{f_r}]}f_1*\cdots*f_{i-1}*a_{i,j}*f_{i+1}*\cdots*f_r,
\end{align*}
where the coefficients $\alpha_{i,j}$ are defined in \Cref{eq:diffGenTayComps}. Herzog proved in \cite[Theorem 2.1]{herzog2007} that $F_1*\cdots *F_r$ is a multigraded free resolution of $R/(I_1+\cdots+I_r)$.

VandeBogert proved in \cite[Theorem 4.5]{Keller} that whenever the $F_i$'s are DG algebras, the resolution $F_1*\cdots*F_r$ also admits a DG algebra structure. The formula that he provides in \cite[Theorem 4.5]{Keller} is missing a sign. Below we state the correct formula and provide a correct proof.

\begin{theorem}\label{thm:DGA}
Let $F_1,\ldots, F_r$ be DG algebra resolutions of $R/I_1,\ldots, R/I_r$. Let $f,f'\in F_i$ be bigraded elements and consider the product $ff'$
\begin{equation}\label{eq:genTayDGA}
ff'=\sum_{j_i=1} ^{n_i}\alpha_{i,j_i,f,f'}\frac{m_fm_{f'}}{m_{a_{i,j_i}}}a_{i,j_i},
\end{equation}
with $\alpha_{i,j_i,f,f'}\in \kk$. Let $f_1\in F_1,\ldots, f_r\in F_r$ be bigraded elements, the following product defines a DG algebra structure on $F_1*\cdots *F_r$
\[
(f_1*\cdots*f_r)(f_1'*\cdots*f_r')=
\]
\[
(-1)^{\displaystyle\sum_{i=1}^r|f_i'|\sum_{j=i+1}^r|f_j|}\sum_{\substack{i=1,\ldots,r\\j_i=1,\ldots,n_i}}\alpha_{1,j_1,f_1,f_1'}\cdots\alpha_{r,j_r,f_r,f_r'}\frac{[m_{f_1},\ldots,m_{f_r}][m_{f_1'},\ldots,m_{f_r'}]}{[m_{a_{1,j_1}},\ldots,m_{a_{r,j_r}}]}a_{1,j_1}*\cdots*a_{r,j_r}
\]
\end{theorem}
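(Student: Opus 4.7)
The approach is to compare $F_1*\cdots *F_r$ with the ordinary tensor product $F_1\otimes_R\cdots\otimes_R F_r$, carrying its standard DG algebra structure with Koszul signs, and then descend that structure across a canonical comparison map. Concretely, I would define an $R$-linear map
\[
\varphi\colon F_1\otimes_R\cdots\otimes_R F_r\longrightarrow F_1*\cdots*F_r,\qquad f_1\otimes\cdots\otimes f_r\longmapsto \frac{m_{f_1}\cdots m_{f_r}}{[m_{f_1},\ldots,m_{f_r}]}\,f_1*\cdots*f_r
\]
on bihomogeneous basis tensors, extended $R$-linearly. A direct comparison of the two differential formulas shows $\varphi$ is a multigraded chain map; moreover every monomial ratio that occurs becomes a unit after inverting $x_1,\ldots,x_n$, so $\varphi$ becomes an isomorphism of complexes after base change to $S:=R[x_1^{-1},\ldots,x_n^{-1}]$.

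With $\varphi$ in hand, the plan is to (i) declare the product on $F_1*\cdots*F_r$ by the formula in the statement, (ii) check that the resulting product actually lands in $F_1*\cdots*F_r$, and (iii) verify that $\varphi$ is then a map of DG algebras. For (i) and (iii), I would expand $(f_1 f_1')\otimes\cdots\otimes(f_rf_r')$ in the tensor product via \Cref{eq:genTayDGA} and push the result through $\varphi$: after the prefactors $\tfrac{m_{f_1}\cdots m_{f_r}}{[m_{f_1},\ldots,m_{f_r}]}$ and $\tfrac{m_{f_1'}\cdots m_{f_r'}}{[m_{f_1'},\ldots,m_{f_r'}]}$ are cleared, the output is exactly the right-hand side of the theorem, and comparison with $\varphi(\xi)\cdot\varphi(\xi')$ pins down the formula. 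For (ii), a variable-by-variable check shows that $v_k(m_{a_{i,j_i}})\le v_k(m_{f_i})+v_k(m_{f_i'})$ combined with $\max_i\bigl(v_k(m_{f_i})+v_k(m_{f_i'})\bigr)\le \max_i v_k(m_{f_i})+\max_i v_k(m_{f_i'})$ forces $[m_{a_{1,j_1}},\ldots,m_{a_{r,j_r}}]$ to divide $[m_{f_1},\ldots,m_{f_r}][m_{f_1'},\ldots,m_{f_r'}]$, so the stated monomial ratio is genuinely in $R$. The sign $(-1)^{\sum_i|f_i'|\sum_{j>i}|f_j|}$ is precisely the Koszul sign arising when one braids $(f_1\otimes\cdots\otimes f_r)\otimes(f_1'\otimes\cdots\otimes f_r')$ into $(f_1f_1')\otimes\cdots\otimes(f_rf_r')$, so the signs agree by construction rather than by separate calculation.

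Once the product is well-defined and makes $\varphi$ a DG algebra map, the remaining DG algebra axioms for $F_1*\cdots*F_r$ follow by descent. Associativity, graded commutativity, vanishing of squares in odd homological degree, and the Leibniz rule are each polynomial identities in the product and the differential; they hold in $F_1\otimes_R\cdots\otimes_R F_r$ by the classical DG algebra structure on tensor products, so they hold after base change to $S$ and hence, via the isomorphism $\varphi\otimes_R S$, in $(F_1*\cdots*F_r)\otimes_R S$. Because $F_1*\cdots*F_r$ is a free $R$-module and $R$ is a domain, the localization map $F_1*\cdots*F_r\hookrightarrow (F_1*\cdots*F_r)\otimes_R S$ is injective, so all these identities descend back to $F_1*\cdots*F_r$. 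The main obstacle I anticipate is the bookkeeping in step (i): tracking the interplay between the coefficients $\alpha_{i,j_i,f,f'}$ and the three LCM factors (one each for $\varphi(\xi)$, $\varphi(\xi')$, and the target $*$-basis element) so that the prefactor collapses cleanly to $\tfrac{[m_{f_1},\ldots,m_{f_r}][m_{f_1'},\ldots,m_{f_r'}]}{[m_{a_{1,j_1}},\ldots,m_{a_{r,j_r}}]}$, and confirming in the process that VandeBogert's formula was missing exactly the Koszul sign, which emerges automatically from this descent argument.
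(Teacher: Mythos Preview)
Your proposal is correct and follows essentially the same route as the paper: both define the comparison map $\varphi\colon F_1\otimes_R\cdots\otimes_R F_r\to F_1*\cdots*F_r$ sending $f_1\otimes\cdots\otimes f_r$ to $\frac{m_{f_1}\cdots m_{f_r}}{[m_{f_1},\ldots,m_{f_r}]}\,f_1*\cdots*f_r$ (the paper writes this as a gcd in the $r=2$ case), observe it is a chain map that becomes an isomorphism after inverting the monomials, verify multiplicativity of $\varphi$ with respect to the stated product, and then descend the DG algebra axioms from the tensor product via the localization isomorphism. The only differences are packaging: the paper cites Herzog for the chain-map property and \cite[Lemma~4.2]{Keller} for the descent step, and writes out only $r=2$, whereas you sketch the divisibility check and the descent argument explicitly.
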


\begin{proof}
For simplicity we show the proof for $r=2$. Consider the map $\phi:F_1\otimes_RF_2\rightarrow F_1*F_2$, defined on bigraded elements as $f_1\otimes f_2\mapsto (m_{f_1},m_{f_2})f_1*f_2$, where the parentheses denote the greatest common divisor. This map is a chain map by \cite[(3)]{herzog2007}. Let $S$ be the set of monomials of $R$ (including 1). It is clear that $S^{-1}\phi$ is an isomorphism of complexes. Therefore by \cite[Lemma 4.2]{Keller}, to prove the theorem it suffices to show that $\phi((f_1\otimes f_2)(f_1'\otimes f_2'))=\phi(f_1\otimes f_2)\phi(f_1'\otimes f_2')$, where the product on the left hand side of the equality is given by the standard DG algebra structure of the tensor product, while the product in the right hand side of the equality is the one defined in the statement of the theorem. We check the equality below $\phi(f_1\otimes f_2)\phi(f_1'\otimes f_2')$
\begin{align*}
&=(m_{f_1},m_{f_2})(m_{f_1'},m_{f_2'})(f_1*f_2)(f_1'*f_2')\\
&=(-1)^{|f_1'||f_2|}(m_{f_1},m_{f_2})(m_{f_1'},m_{f_2'})\sum_{\substack{j_1=1,\ldots,n_1\\j_2=1,\ldots,n_2}}\alpha_{1,j_1,f_1,f_1'}\alpha_{2,j_2,f_2,f_2'}\frac{[m_{f_1},m_{f_2}][m_{f_1'},m_{f_2'}]}{[m_{a_{1,j_1}},m_{a_{2,j_2}}]}a_{1,j_1}*a_{2,j_2}\\
&=(-1)^{|f_1'||f_2|}\sum_{\substack{j_1=1,\ldots,n_1\\j_2=1,\ldots,n_2}}\alpha_{1,j_1,f_1,f_1'}\alpha_{2,j_2,f_2,f_2'}\frac{m_{f_1}m_{f_1'}}{m_{a_{1,j_1}}}\frac{m_{f_2}m_{f_2'}}{m_{a_{2,j_2}}}(m_{a_{1,j_1}},m_{a_{2,j_2}})a_{1,j_1}*a_{2,j_2}\\
&=(-1)^{|f_1'||f_2|}\sum_{\substack{j_1=1,\ldots,n_1\\j_2=1,\ldots,n_2}}\alpha_{1,j_1,f_1,f_1'}\alpha_{2,j_2,f_2,f_2'}\frac{m_{f_1}m_{f_1'}}{m_{a_{1,j_1}}}\frac{m_{f_2}m_{f_2'}}{m_{a_{2,j_2}}}\phi(a_{1,j_1}\otimes a_{2,j_2})\\
&=(-1)^{|f_1'||f_2|}\phi((f_1f_1')\otimes(f_2f_2'))\\
&=(-1)^{|f_1'||f_2|}(-1)^{|f_1'||f_2|}\phi((f_1\otimes f_2)(f_1'\otimes f_2)\\
&=\phi((f_1\otimes f_2)(f_1'\otimes f_2)),
\end{align*}
where the 5th equality follows from \Cref{eq:genTayDGA}.
\end{proof}

\section{Main Result}

In this section we prove our main result, namely that the generalized Taylor resolution $F_1*\cdots*F_r$ admits a structure of a DG $\Gamma$-algebra whenever the components $F_1,\ldots, F_r$ do. The next Lemma will be the key ingredient in the proof of our main result.

\begin{lemma}\label{lem:transfer}
Let $F,G$ be complexes of free $R$-modules with a DG algebra structure such that $F$ also admits a compatible divided powers structure.  Let $\phi:F\rightarrow G$ be a DG algebra map. For each $y\in G$ of even positive degree fix a sequence $y^{(i)}$ for all $i\geq0$. Let $S$ be a multiplicative closed subset of $R$. If $S^{-1}\phi$ is an isomorphism of DG algebras and
\[
\phi(x^{(i)})=\phi(x)^{(i)}
\]
for all $x\in F$ of even positive degree and for all $i\geq0$, then $G$ is a DG $\Gamma$-algebra with divided powers given by $y^{(i)}$ for all $y\in G$ of even positive degree and for all $i\geq0$.
\end{lemma}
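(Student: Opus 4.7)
The overall strategy is to verify each of the six DG $\Gamma$-algebra axioms for $G$ with the chosen divided powers by passing to the localization $S^{-1}G$, which inherits a DG $\Gamma$-structure from $S^{-1}F$ via the isomorphism $S^{-1}\phi$. The key reduction is that $R=\kk[x_1,\ldots,x_n]$ is an integral domain and $G$ is a complex of free $R$-modules, so $G$ is torsion-free; provided $0\notin S$, the localization map $G\to S^{-1}G$ is injective, and any equality of elements of $G$ can be verified after passing to $S^{-1}G$.

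Next I would extend the DG $\Gamma$-structure on $F$ to $S^{-1}F$ by the standard rule $(x/s)^{(k)}:=x^{(k)}/s^k$, a routine check using the axioms on $F$ together with multiplicativity of $s$. Transferring along the DG algebra isomorphism $S^{-1}\phi$, this makes $S^{-1}G$ into a DG $\Gamma$-algebra; denote the resulting divided powers on $S^{-1}G$ by $z^{\{k\}}$.

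The heart of the argument is identifying the chosen sequence on $G$ with this transferred structure, in the sense that $y^{(k)}/1=(y/1)^{\{k\}}$ in $S^{-1}G$ for every $y\in G$ of even positive degree. For $y=\phi(x)$ with $x\in F$, this follows immediately from the hypothesis $\phi(x^{(i)})=\phi(x)^{(i)}$ together with the construction of $z^{\{k\}}$. For a general $y\in G$, surjectivity of $S^{-1}\phi$ yields $x\in F$ and $s\in S$ with $y/1=\phi(x)/s$ in $S^{-1}G$, whence $sy=\phi(x)$ in $G$ by torsion-freeness. The transferred power then computes as $(y/1)^{\{k\}}=\phi(x^{(k)})/s^k=(sy)^{(k)}/s^k$, so the desired compatibility amounts to the identity $s^k y^{(k)}=(sy)^{(k)}$ in $G$, which must be part of the coherent choice of sequence consistent with the hypothesis.

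With the compatibility in hand, each of axioms (2)--(6) for the chosen $y^{(k)}$ in $G$ is an equality whose localization is the corresponding instance of the axiom for $z^{\{k\}}$ in $S^{-1}G$, which holds because the DG $\Gamma$-structure on $S^{-1}G$ was transferred from $S^{-1}F$ via a DG algebra iso and $S^{-1}F$ inherits its axioms from $F$. Injectivity of $G\hookrightarrow S^{-1}G$ then lifts each such equality back to $G$. Axiom (1)---namely $y^{(0)}=1$, $y^{(1)}=y$, and the bigrading conditions---is built into the data of the fixed sequence. I expect the main obstacle to be the compatibility step above: the hypothesis directly controls the chosen sequence only on the image of $\phi$, so extending the identification $y^{(k)}/1=(y/1)^{\{k\}}$ uniformly across $G$ is the principal technical point, after which the descent through $G\hookrightarrow S^{-1}G$ is formal.
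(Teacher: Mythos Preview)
Your approach is essentially the paper's: embed $G\hookrightarrow S^{-1}G$ by torsion-freeness, show the chosen sequence agrees after localization with the $\Gamma$-structure transferred from $S^{-1}F$ along $S^{-1}\phi$, and then verify each axiom in the localization and pull it back. The paper's key claim $(S^{-1}\phi)^{-1}(y^{(n)}/1)=\bigl((S^{-1}\phi)^{-1}(y/1)\bigr)^{(n)}$ is exactly your compatibility $y^{(k)}/1=(y/1)^{\{k\}}$, viewed in $S^{-1}F$ rather than $S^{-1}G$.

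On your flagged obstacle: the paper's proof of this claim also uses $(by)^{(n)}=b^{n}y^{(n)}$ for $b\in S$, justified there by ``the divided powers algebra axioms''. This is not derived from the stated hypothesis $\phi(x^{(i)})=\phi(x)^{(i)}$ alone; it is being taken as part of the chosen-sequence data (and in the paper's intended application the sequence is defined so that this scalar rule holds by construction). So your hedge that it ``must be part of the coherent choice'' matches precisely how the paper proceeds. One minor correction: Axiom~(1) is not assumed built into the data; the paper indicates it is handled by the same localization argument as Axioms~(2)--(6).
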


\begin{proof}
We first prove the following equality
\begin{equation}\label{eq:claim}
\left(S^{-1}\phi\right)^{-1}\left(\frac{y^{(n)}}{1}\right) = \left(\left(S^{-1}\phi\right)^{-1}\left(\frac{y}{1}\right)\right)^{(n)},
\end{equation}
for all $y\in G$ of even positive degree. Equation \Cref{eq:claim} is equivalent to
\begin{equation*}
  \frac{y^{(n)}}{1} = S^{-1}\phi \left( (S^{-1}\phi)^{-1} \left( \frac{y}{1} \right) \right)^{(n)}.
\end{equation*}
Set
\begin{equation*}
  \frac{a}{b} = (S^{-1}\phi)^{-1} \left( \frac{y}{1} \right),
\end{equation*}
so that:
\begin{equation*}
  \frac{y}{1} = S^{-1}\phi \left( \frac{a}{b} \right) = \frac{\phi(a)}{b}.
\end{equation*}
Thus there exists $s \in S$ such that
$s(yb - \phi(a)) = 0$.
Since $F,G$ are complexes of free $R$-modules over an integral domain, they are torsion free; thus,
$yb = \phi(a)$.
Moreover, by the definition of divided powers in a localization one has
\[
\left( (S^{-1}\phi)^{-1}\left(\frac{y}{1}\right) \right)^{(n)} = \left(\frac{a}{b}\right)^{(n)} = \frac{a^{(n)}}{b^n}.
\]
Therefore
\[
S^{-1} \phi \left( \frac{a^{(n)}}{b^n} \right) =\frac{\phi(a^{(n)})}{b^n} = \frac{\phi(a)^{(n)}}{b^n} = \frac{(yb)^{(n)}}{b^n} = \frac{b^n y^{(n)}}{b^n} = \frac{y^{(n)}}{1},
\]
where the first equality follows from the definition of $S^{-1}\phi$, the second from the hypothesis, the third was follows from the discussion above, the fourth one from the divided powers algebra axioms, and the last one is obvious. This concludes the proof that \Cref{eq:claim} holds.

Now we are going to prove that $G$ satisfies the DG $\Gamma$-algebra axioms from \ref{DGGammaDef}. We will only show Axiom 2 and 6, the remaining ones being similar.

\textbf{Axiom 2:} Let $y \in G$ such that $|y|$ is even and positive. We need to prove that
\[
y^{(n)} y^{(m)} - \binom{n+m}{n} y^{(n+m)}=0.
\]
By the properties of localization and since $F$ and $G$ are complexes of torsion-free modules, this is equivalent to:
\[
\frac{y^{(n)}}{1} \cdot \frac{y^{(m)}}{1} - \binom{n+m}{n} \frac{y^{(n+m)}}{1} = 0.
\]
Since $\left(S^{-1}\phi\right)^{-1}$ is an isomorphism, the previous display is equivalent to
\[
\left(S^{-1}\phi\right)^{-1}\left(\frac{y^{(n)}}{1}\right) \cdot \left(S^{-1}\phi\right)^{-1}\left(\frac{y^{(m)}}{1}\right) - \binom{n+m}{n} \left(S^{-1}\phi\right)^{-1}\left(\frac{y^{(n+m)}}{1}\right) = 0.
\]
Thus from equation \eqref{eq:claim} the previous display is equivalent to
\[
\left(\left(S^{-1}\phi\right)^{-1}\left(\frac{y}{1}\right)\right)^{(n)} \left(\left(S^{-1}\phi\right)^{-1}\left(\frac{y}{1}\right)\right)^{(m)} - \binom{n+m}{n} \left(\left(S^{-1}\phi\right)^{-1}\left(\frac{y}{1}\right)\right)^{(n+m)} = 0.
\]
Since $S^{-1}F$ is a DG $\Gamma$-algebra, Axiom 2 holds in $S^{-1}F$, therefore the equality in the previous display follows.

\textbf{Axiom 6:} We have to verify that for every $y \in G$ of even positive degree, $\partial^{G}(y^{(n)}) = \partial^{G}(y)y^{(n-1)}$. The previous equality is equivalent to 
\[
\frac{\partial^G(y^{(n)})}{1} - \frac{\partial^G(y)}{1} \cdot \frac{y^{(n-1)}}{1} = 0
\]
which in turn is equivalent to
\begin{equation}\label{eq:DiffComp}
\partial^{S^{-1}G} \left(\frac{y^{(n)}}{1}\right) - \partial^{S^{-1}G}\left(\frac{y}{1}\right) \cdot \frac{y^{(n-1)}}{1} = 0.
\end{equation}
Since $\partial^G\phi = \phi\partial^F$ 
it follows that:
\[
\partial^{S^{-1}G} = S^{-1}\phi \circ \partial^{S^{-1}F} \circ \left(S^{-1}\phi\right)^{-1}.
\]
Therefore, we can rewrite \eqref{eq:DiffComp} as
\[
S^{-1}\phi \partial^{S^{-1}F} \left(S^{-1}\phi\right)^{-1}\left(\frac{y^{(n)}}{1}\right) - S^{-1}\phi \partial^{S^{-1}F} \left(S^{-1}\phi\right)^{-1}\left(\frac{y}{1}\right) \cdot\left(\frac{y^{(n-1)}}{1}\right) = 0
\]
Multiplying by $\left(S^{-1}\phi\right)^{-1}$ on the left, we obtain:
\[
\partial^{S^{-1}F}\left(S^{-1}\phi\right)^{-1}\left(\frac{y^{(n)}}{1}\right) - \partial^{S^{-1}F}\left(S^{-1}\phi\right)^{-1}\left(\frac{y}{1}\right)\left(S^{-1}\phi\right)^{-1}\left(\frac{y^{(n-1)}}{1}\right) = 0.
\]
Applying \eqref{eq:claim} yields
\[
\partial^{S^{-1}F}\left(\left(\left(S^{-1}\phi\right)^{-1}\left(\frac{y}{1}\right)\right)^{(n)}\right) - \partial^{S^{-1}F}\left(\left(S^{-1}\phi\right)^{-1}\left(\frac{y}{1}\right)\right)\left(\left(S^{-1}\phi\right)^{-1}\left(\frac{y}{1}\right)\right)^{(n-1)} = 0.
\]
The previous equality holds since $S^{-1}F$ is a DG $\Gamma$-algebra.\qedhere

\end{proof}

\begin{remark}
Let $F_1,\ldots,F_r$ be DG $\Gamma$-algebras. Then $F_1\otimes_R\cdots\otimes_R F_r$ is a DG $\Gamma$-algebra, where the divided power structure is given by
\[
(f_1\otimes\cdots\otimes f_r)^{(i)}=\begin{cases}
0,\quad\text{if there is an $k$ such that $|f_k|$ is odd}\\
f_1^i\otimes\cdots f_{j-1}^i\otimes f_j^{(i)}\otimes f_{j+1}^i\otimes\cdots\otimes f_r^i,\quad\text{$|f_k|$ even $\forall k$ and $j$ smallest such that $|f_j|>0$.}
\end{cases}
\]
This is the unique DG $\Gamma$-algebra structure for which the canonical maps $F_i\rightarrow F_1\otimes_R\cdots\otimes_R F_r$ are DG $\Gamma$-algebra maps for all $i$'s. See \cite[Proposition 1.7.10]{GulliksenLevin}
\end{remark}

\begin{remark}
We point out that it follows from the DG $\Gamma$-algebra axioms that if $f_1,\ldots,f_r$ are even and $f_l,f_j$ have positive degree, then
\[
f_1^i\otimes\cdots f_{j-1}^i\otimes f_j^{(i)}\otimes f_{j+1}^i\otimes\cdots\otimes f_r^i=(f_1\otimes\cdots\otimes f_r)^{(i)}=f_1^i\otimes\cdots f_{l-1}^i\otimes f_l^{(i)}\otimes f_{l+1}^i\otimes\cdots\otimes f_r^i.
\]
The same will be true for the DG $\Gamma$-algebra structure on the generalized Taylor resolution given below. Defining it first by taking the smallest $j$ such $|f_j|>0$ allows us to have a well-posed definition, but it will follow from the DG $\Gamma$-algebra axioms that taking $j$ to be the smallest is not needed.
\end{remark}

\begin{theorem}
Let $I_1,\ldots, I_r$ be monomial ideals. Let $F_i$ be a free DG $\Gamma$-algebra resolution of $R/I_i$ for all $i=1,\ldots,r$. Then, the resolution $F_1*\cdots *F_r$ is a DG $\Gamma$-algebra where the divided power structure is given, on elements of positive and even homological degree, by
\[
(f_1*\cdots*f_r)^{(i)}=\begin{cases}
0\quad\text{if there is an $k$ such that $|f_k|$ is odd}\\
f_1^i*\cdots f_{j-1}^i*f_j^{(i)}*f_{j+1}^i*\cdots*f_r^i\quad\text{$|f_k|$ even $\forall k$ and $j$ smallest such that $|f_j|>0$.}
\end{cases}
\]
For the divided power of a linear combination of the elements above one uses the axioms defining divided power algebras.

Moreover this is the unique DG $\Gamma$-structure for which the inclusion maps $F_i\rightarrow F_1*\cdots*F_r$ are maps of DG $\Gamma$-algebras for all $i=1,\ldots,r$.
\end{theorem}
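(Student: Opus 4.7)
The plan is to apply \Cref{lem:transfer} with $F = F_1 \otimes_R \cdots \otimes_R F_r$ (a DG $\Gamma$-algebra by the first Remark above) and $G = F_1 * \cdots * F_r$ (a DG algebra by \Cref{thm:DGA}), using a generalization of the comparison map from the proof of \Cref{thm:DGA}. First I would define $\phi : F_1 \otimes_R \cdots \otimes_R F_r \to F_1 * \cdots * F_r$ on bihomogeneous simple tensors by
\[
\phi(f_1 \otimes \cdots \otimes f_r) = c(f_1,\ldots,f_r)\, f_1 * \cdots * f_r, \qquad c(f_1,\ldots,f_r) := \frac{m_{f_1} \cdots m_{f_r}}{[m_{f_1}, \ldots, m_{f_r}]},
\]
and verify, by essentially the computation in the proof of \Cref{thm:DGA} now extended to $r$ factors, that $\phi$ is a DG algebra map and $S^{-1}\phi$ is an isomorphism when $S$ is the multiplicative set of monomials of $R$. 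In parallel, I would fix the sequences $y^{(i)}$ on $G$: on basis pure stars $a_{1,j_1} * \cdots * a_{r,j_r}$ of even positive homological degree use the formula in the statement, and extend to arbitrary bihomogeneous elements of the same type via axioms 3 and 4 of \Cref{DGGammaDef} applied to the unique $R$-linear expansion in basis pure stars.

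The core step is to verify the hypothesis $\phi(x^{(i)}) = \phi(x)^{(i)}$ of \Cref{lem:transfer} for every bihomogeneous $x \in F$ of even positive degree. For a basis pure tensor $z = a_{1,j_1} \otimes \cdots \otimes a_{r,j_r}$ with each $|a_{k,j_k}|$ even and $k_0$ the smallest index with $|a_{k_0,j_{k_0}}| > 0$, the first Remark gives
\[
z^{(i)} = a_{1,j_1}^i \otimes \cdots \otimes a_{k_0-1,j_{k_0-1}}^i \otimes a_{k_0,j_{k_0}}^{(i)} \otimes a_{k_0+1,j_{k_0+1}}^i \otimes \cdots \otimes a_{r,j_r}^i.
\]
Applying $\phi$ to this produces the coefficient $\prod_k m_{a_{k,j_k}}^i / [m_{a_{1,j_1}}^i, \ldots, m_{a_{r,j_r}}^i]$, which equals $c(z)^i$ by the monomial identity $[m_1^i,\ldots,m_r^i] = [m_1,\ldots,m_r]^i$. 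This matches $\phi(z)^{(i)} = c(z)^i (a_{1,j_1} * \cdots * a_{r,j_r})^{(i)}$, obtained by applying axiom 4 of \Cref{DGGammaDef} to the scalar $c(z)$ and using the defining formula on the basis pure star. For a general bihomogeneous $x = \sum_\alpha r_\alpha z_\alpha$ with basis pure tensors $z_\alpha$ sharing the bidegree of $x$, expanding both $x^{(i)}$ and $\phi(x)^{(i)}$ via axioms 3 and 4 reduces the identity to the basis case together with the additivity and multiplicativity of $\phi$. \Cref{lem:transfer} then produces the DG $\Gamma$-algebra structure on $G$ with the prescribed divided powers.

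For the uniqueness assertion, our construction first makes each inclusion $\iota_i : F_i \to G$ a DG $\Gamma$-algebra map: since $\iota_i(f_i) = 1 * \cdots * f_i * \cdots * 1$ has $i$ as its only index of positive degree, the formula gives $\iota_i(f_i)^{(k)} = 1 * \cdots * f_i^{(k)} * \cdots * 1 = \iota_i(f_i^{(k)})$. Conversely, suppose $G$ carries any DG $\Gamma$-structure making all $\iota_i$ DG $\Gamma$-algebra maps. Localization at $S$ makes all $S^{-1}\iota_i$ DG $\Gamma$-algebra maps and yields a DG $\Gamma$-structure on $S^{-1}G$ which, pulled back along the DG algebra isomorphism $(S^{-1}\phi)^{-1}$, gives a DG $\Gamma$-structure on $S^{-1}F$ compatible with the canonical maps $S^{-1}F_i \to S^{-1}F$; the first Remark says this structure on $S^{-1}F$ is unique. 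Hence the structure on $S^{-1}G$ is uniquely determined, and since $G$ embeds in $S^{-1}G$ as a torsion-free $R$-module and divided powers commute with this embedding, the $\Gamma$-structure on $G$ is forced.

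I expect the main obstacle to be the indexing bookkeeping in the compatibility calculation, especially aligning the distinguished index $k_0$ across $F$ and $G$ and checking that the extension from basis pure stars to arbitrary bihomogeneous elements via axioms 3 and 4 is compatible on both sides; the algebraic content itself reduces to the monomial identity above and axiom 4 applied to multiplication by monomial scalars.
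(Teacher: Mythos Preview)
Your existence argument is essentially identical to the paper's: both apply \Cref{lem:transfer} to the comparison map $\phi(f_1\otimes\cdots\otimes f_r)=\dfrac{m_{f_1}\cdots m_{f_r}}{[m_{f_1},\ldots,m_{f_r}]}\,f_1*\cdots*f_r$ (the paper writes the coefficient as $(m_{f_1},\ldots,m_{f_r})$), and both reduce the compatibility $\phi(x^{(i)})=\phi(x)^{(i)}$ to the monomial identity $[m_1^i,\ldots,m_r^i]=[m_1,\ldots,m_r]^i$ on simple tensors. You are slightly more explicit than the paper about extending this compatibility from basis pure tensors to arbitrary bihomogeneous elements via axioms 3 and 4, but the underlying computation is the same.

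Your uniqueness argument, however, takes a genuinely different route. The paper argues directly inside $G$: it writes $f_1*\cdots*f_r$ as $\dfrac{[m_{f_1},\ldots,m_{f_r}]}{m_{f_1}\cdots m_{f_r}}\,\iota_1(f_1)\cdots\iota_r(f_r)$ via \Cref{thm:DGA}, and then computes $(f_1*\cdots*f_r)^{(i)}$ using axiom 4 together with the hypothesis that each $\iota_k$ preserves divided powers, arriving at the stated formula by an explicit chain of equalities. Your argument instead transports the question through the localization isomorphism $S^{-1}\phi$ back to $S^{-1}F$, invokes the uniqueness statement for tensor products of DG $\Gamma$-algebras (the cited Remark, which does localize), and then descends via torsion-freeness. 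Your approach is cleaner conceptually and avoids the coefficient bookkeeping; the paper's direct computation has the advantage of not needing to verify that $\phi$ intertwines the two families of inclusions (which is immediate, since $c(1,\ldots,f_i,\ldots,1)=1$, but you should state it). Both arguments are correct.
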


\begin{proof}
The map
\begin{align*}
\varphi:&F_1\otimes_R\cdots\otimes_R F_r\rightarrow F_1*\cdots*F_r\\
&f_1\otimes\cdots\otimes f_r\longmapsto(m_{f_1},\ldots,m_{f_r})f_1*\cdots*f_r
\end{align*}
is a map of DG algebras by \cite[Theorem 4.5]{Keller}.
It is clearly invertible when localized at the multiplicative subset consisting of all monomials, therefore by \Cref{lem:transfer} it suffices to prove
\[
\varphi((f_1\otimes\cdots\otimes f_r)^{(i)})=\varphi(f_1\otimes\cdots\otimes f_r)^{(i)}.
\]

Assuming that $j$ is the smallest index such that $|f_j|>0$, the following chain of equalities gives the desired result
\begin{align*}
\varphi((f_1\otimes\cdots\otimes f_r)^{(i)})&=\varphi(f_1^i\otimes\cdots\otimes f_j^{(i)}\otimes\cdots\otimes f_r^i)\\
&=(m_{f_1^i},\ldots,m_{f_j^{(i)}},\ldots,m_{f_r^i})f_1^i*\cdots*f_j^{(i)}*\cdots*f_r^i\\
&=(m_{f_1}^i,\ldots,m_{f_j}^i,\ldots,m_{f_r}^i)f_1^i*\cdots*f_j^{(i)}*\cdots*f_r^i\\
&=(m_{f_1},\ldots,m_{f_j},\ldots,m_{f_r})^if_1^i*\cdots*f_j^{(i)}*\cdots*f_r^i\\
&=((m_{f_1},\ldots,m_{f_r})f_1*\cdots*f_r)^{(i)}\\
&=\varphi(f_1\otimes\cdots\otimes f_r)^{(i)}.
\end{align*}

We show that the DG $\Gamma$-algebra structure is unique. Assume that the generalize Taylor resolution admits a DG $\Gamma$-structure and assume that the canonical maps
\begin{align*}
\iota_i:&F_i\rightarrow F_1*\cdots*F_r\\
&f_i\mapsto1*\cdots1*f_i*1\cdots1
\end{align*}
are DG $\Gamma$-algebra maps for all $i$. Let $f_1*\cdots*f_r\in F_1*\cdots*F_r$ of even and positive homological degree and assume that $j$ is the smallest index such that $|f_j|>0$. Then
\begin{align*}
(f_1*\cdots *f_r)^{(i)}&=\left(\frac{[m_{f_1},\ldots, m_{f_r}]}{m_{f_1}\cdots m_{f_r}}\iota_1(f_1)\cdots\iota_r(f_r)\right)^{(i)}\\
&=\left(\frac{1}{(m_{f_1},\ldots,m_{f_r})}\iota_1(f_1)\cdots\iota_r(f_r)\right)^{(i)}\\
&=\frac{1}{(m_{f_1},\ldots,m_{f_r})^i}(\iota_1(f_1)\cdots\iota_r(f_r))^{(i)}\\
&=\frac{1}{(m_{f_1},\ldots,m_{f_r})^i}\iota_1(f_1)^i\cdots\iota_j(f_j)^{(i)}\cdots\iota_r(f_r)^i\\
&=\frac{1}{(m_{f_1},\ldots,m_{f_r})^i}\iota_1(f_1^i)\cdots\iota_j(f_j^{(i)})\cdots\iota_r(f_r^i)\\
&=\frac{1}{(m_{f_1},\ldots,m_{f_r})^i}\frac{m_{f_1^i}\cdots m_{f_j^{(i)}}\cdots m_{f_r^i}}{[m_{f_1^i},\ldots,m_{f_j^{(i)}},\ldots,m_{f_r^i}]}f_1^i*\cdots*f_j^{(i)}*\cdots*f_r^i\\
&=\frac{1}{(m_{f_1},\ldots,m_{f_r})^i}\frac{m_{f_1}^i\cdots m_{f_j}^i\cdots m_{f_r}^i}{[m_{f_1}^i,\ldots,m_{f_j}^i,\ldots,m_{f_r}^i]}f_1^i*\cdots*f_j^{(i)}*\cdots*f_r^i\\
&=\frac{1}{(m_{f_1},\ldots,m_{f_r})^i}(m_{f_1}^i,\ldots,m_{f_r}^i)f_1^i*\cdots*f_j^{(i)}*\cdots*f_r^i\\
&=f_1^i*\cdots*f_j^{(i)}*\cdots*f_r^i,
\end{align*}
where the first equality follows from \Cref{thm:DGA}.
\end{proof}

\section{Comparison to the Taylor Resolution}

In this section we show that if $I_1,\ldots,I_r$ are principal monomial ideals and $F_i$ is the Koszul resolution of $R/I_i$, then the Taylor resolution of $I_1+\cdots+I_r$ and the generalized Taylor resolution $F_1*\cdots*F_r$ are isomorphic as DG $\Gamma$-algebras. We point out that it was already observed by Herzog in \cite{herzog2007} that these complexes were isomorphic, even though he did not include a proof. VandeBogert also observed in \cite{Keller} that these complexes were isomorphic as DG algebras, but he also did not include a proof. Since VandeBogert's product formula was missing a sign, we have decided to show the proof of this claim in full detail.

\begin{theorem}
Let \( I = (u_1, \dots, u_r) \) be a monomial ideal in \( R = \kk[x_1, \dots, x_n] \). Let $I_i=(u_i)$ and $F_i$ be the Koszul resolution of $R/I_i$. Then, the Taylor resolution \( T_{\bullet} \) of $I$ and the generalized Taylor resolution 
\[
F_\bullet= F_1 * \cdots * F_r
\]
are isomorphic as DG \(\Gamma\)-algebras.
\end{theorem}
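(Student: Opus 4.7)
The plan is to exhibit an explicit isomorphism on the natural bases and to check compatibility with all four pieces of structure (bigrading, differential, product, divided powers). Write $F_i\colon 0\to R\xrightarrow{\cdot u_i} R\to 0$, and let $\epsilon_i$ denote the degree-$1$ basis element, so that $|\epsilon_i|=1$, $m_{\epsilon_i}=u_i$, and $\partial^{F_i}(\epsilon_i)=u_i$. Then $F_1*\cdots*F_r$ has a bihomogeneous $R$-basis consisting of the simple tensors $e'_F:=g_1*\cdots*g_r$ with $g_i=\epsilon_i$ if $i\in F$ and $g_i=1$ if $i\notin F$, and the multidegree of $e'_F$ is the least common multiple $m_F$. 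Define
\[
\Phi\colon T_\bullet\longrightarrow F_1*\cdots*F_r,\qquad e_F\longmapsto e'_F,
\]
extended $R$-linearly. This is obviously a bijection on bases that preserves homological and internal bigrading.

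Next I would verify step by step that $\Phi$ is a DG algebra map. For the differential, plug $e'_F$ into the definition of $\partial^{F_1*\cdots*F_r}$: only those indices $k\in F$ contribute (since $\partial(1)=0$), and for such $k$ one has $|g_1|+\cdots+|g_{k-1}|=\alpha(F,k)$, $[m_{g_1},\ldots,m_{g_r}]=m_F$, and replacing $g_k=\epsilon_k$ by $1$ turns the tensor into $e'_{F\setminus\{k\}}$ with denominator $m_{F\setminus\{k\}}$; this recovers Taylor's differential verbatim. For the product, apply the formula in \Cref{thm:DGA} slot-by-slot: if $V\cap W\neq\emptyset$ then some slot contains $\epsilon_i\cdot\epsilon_i=0$ and the product vanishes; if $V\cap W=\emptyset$ then each slot contributes coefficient $1$ and local factor $1$, the multidegree factor collapses to $m_Vm_W/m_{V\cup W}$, and the global sign
\[
\sum_{i=1}^r|g'_i|\sum_{j>i}|g_j|=|\{(j,i)\in V\times W:i<j\}|=\alpha(V,W)
\]
reproduces Gemeda's sign on the Taylor product. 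This is precisely the step for which VandeBogert's missing sign would have caused trouble, and it is the main technical obstacle.

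Finally, for the divided powers, observe that if $|F|$ is even and positive then $F\neq\emptyset$, so at least one slot $g_k=\epsilon_k$ has odd homological degree; by the divided power formula on $F_1*\cdots*F_r$ proved in the previous section, this forces $(e'_F)^{(k)}=0$ for all $k\geq 2$. This matches Avramov's divided power structure on $T_\bullet$ on basis elements, and since the divided powers of arbitrary elements of even positive degree are uniquely determined from those of basis elements by the DG $\Gamma$-axioms (additivity and the product rule), $\Phi$ intertwines the two divided power structures on all of $T_\bullet$. Assembling these verifications, $\Phi$ is an isomorphism of DG $\Gamma$-algebras.
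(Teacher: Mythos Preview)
Your proposal is correct and follows essentially the same route as the paper: the same bijection $e_F\mapsto e'_F$ on bases, the same verifications for the differential and for the product in the two cases $V\cap W=\emptyset$ and $V\cap W\neq\emptyset$, including the identical sign computation $\sum_i|g'_i|\sum_{j>i}|g_j|=\alpha(V,W)$. Your argument for the vanishing of $(e'_F)^{(k)}$ for $k\ge 2$ is in fact cleaner than the paper's---you invoke directly the ``some slot has odd degree'' branch of the divided power formula on $F_1*\cdots*F_r$, whereas the paper argues via the other branch and the vanishing of $(F_j)_{\ge 2}$.
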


\begin{proof}
Let $F_i=(0\rightarrow Rf_i\rightarrow R\rightarrow0)$ be the Koszul resolution of $R/(u_i)$.
We construct a homomorphism
\[
\Phi: T_{\bullet} \to F_1 * \cdots * F_r
\]
and prove that it is an isomorphism of DG \( \Gamma \)-algebras. For a subset $\sigma \subseteq \{1, \dots, r\}$, we introduce the notation:
\[
f^*_{\sigma} := g_1*\cdots *g_r,
\]
where 
\[
g_i:=\begin{cases}f_i,\quad\mathrm{if\;}i\in\sigma\\
1,\quad\mathrm{if\;}i\not\in\sigma.
\end{cases}
\]
We define \( \Phi \) on basis elements of \( T_{\bullet} \) by $\Phi(e_\sigma) = f^*_{\sigma}$.
To verify that \( \Phi \) commutes with differentials, we need to show 
\begin{equation}\label{eq:cxMap}
\Phi(\partial^{T_{\bullet}}(e_\sigma)) = \partial^{F_\bullet}(\Phi(e_\sigma)).
\end{equation}
Computing the left side of \Cref{eq:cxMap} yields
\begin{align*}
\Phi(\partial^{T_{\bullet}}(e_\sigma)) &= \sum_{i \in \sigma} (-1)^{\alpha(i,\sigma)} \frac{m_\sigma}{m_{\sigma\setminus\{i\}}} \Phi(e_{\sigma \setminus \{i\}})\\
&=\sum_{i \in \sigma} (-1)^{\alpha(i,\sigma)} \frac{m_\sigma}{m_{\sigma\setminus\{i\}}} f^*_{\sigma\setminus\{i\}}.
\end{align*}
Next we compute the right side of \Cref{eq:cxMap}. First we notice the following
\[
\partial^{F_i}(g_i)=\begin{cases}
u_i,\quad i\in\sigma\\
0,\quad i\not\in\sigma.
\end{cases}
\]
Since $\partial^{F_\bullet}(\Phi(e_\sigma))=\partial^{F_\bullet}(f_\sigma^*)$, using the formula for the differential of the generalized Taylor resolution and the observation above one gets
\[
\partial^{F_\b}(f_\sigma^*)=\sum_{i\in\sigma}(-1)^{|g_1|+\cdots+|g_{i-1}|}\frac{[m_{g_1},\ldots,m_{g_r}]}{[m_{g_1},\ldots,m_{g_{i-1}},m_{g_{i+1}},\ldots,m_{g_r}]}g_1*\cdots g_{i-1}*1*g_{i+1}*\cdots g_r.
\]
Now we make the following elementary observations:
\[
g_1*\cdots g_{i-1}*1*g_{i+1}*\cdots g_r=f_{\sigma\setminus\{i\}}^*,
\]
and
\[
[m_{g_1},\ldots,m_{g_r}]=m_{\sigma},\quad [m_{g_1},\ldots,m_{g_{i-1}},m_{g_{i+1}},\ldots,m_{g_r}]=m_{\sigma\setminus\{i\}}.
\]
To conclude the proof of \Cref{eq:cxMap} it just remains to check the signs
\begin{align*}
|g_1|+\cdots+|g_{i-1}|&=\sum_{\substack{j\in\sigma\\j\leq i-1}}|f_j|\\
&=|\{j\in\sigma\mid j\leq i-1\}|\\
&=\alpha(i,\sigma),
\end{align*}
where the second equality follows since $|f_j|=1$ for all $j$'s. This concludes the proof that $\Phi$ commutes with the differential.

Now we check that $\Phi$ is a DG algebra map, i.e. we need to check that $\Phi(e_Pe_Q)=\Phi(e_P)\Phi(e_Q)$, that is we need to check that $\Phi(e_Pe_Q)=f_P^*f_Q^*$. We divide the proof into two cases.

\textbf{Case 1: $P\cap Q=\emptyset$}. In this case
\[
\Phi(e_Pe_Q)=(-1)^{\alpha(P,Q)}(m_P,m_Q)\Phi(e_{P\cup Q})=(-1)^{\alpha(P,Q)}(m_P,m_Q) f_{P\cup Q}^*.
\]
Next we compute the product $f_P^*f_Q^*$. We introduce the notation $(f_P^*)_i$ for the $i$th component of $f_P^*$, which is 1 if $i\not\in P$ and $f_i$ otherwise. Similarly for $Q$. We begin by noticing that
\[
(f_P^*)_i(f_Q^*)_i=\begin{cases}
1\quad i\not\in P\cap Q\\
(f_P^*)_i\quad i\in P\\
(f_Q^*)_i\quad i\in Q.
\end{cases}
\]
Therefore, using the formula for the product of the generalized Taylor resolution one gets
\[
f_P^*f_Q^*=(-1)^{\displaystyle\sum_{i=1}^r|(f_Q^*)_i|\sum_{j=i+1}^r|(f_P^*)_j|}\frac{m_{f_P^*}m_{f_Q^*}}{m_{f_{P\cup Q}^*}}f_{P\cup Q}^*.
\]
We first observe that $m_{f_P^*}=m_P$, and therefore
\[
\frac{m_{f_P^*}m_{f_Q^*}}{m_{f_{P\cup Q}^*}}=\frac{m_{P}m_{Q}}{m_{P\cup Q}}=\frac{m_{P}m_{Q}}{[m_P,m_Q]}=(m_P,m_Q).
\]
Next we check the the signs coincide
\begin{align*}
\sum_{i=1}^r|(f_Q^*)_i|\sum_{j=i+1}^r|(f_P^*)_j|&=\sum_{i=1}^r|(f_Q^*)_i|\cdot|\{j\in P\mid i+1\leq j\leq r\}|\\
&=\sum_{i\in Q}|\{j\in P\mid i+1\leq j\leq r\}|\\
&=|\{(j,i)\in  P\times Q\mid i<j\}|\\
&=\alpha(P,Q).
\end{align*}

\textbf{Case 2: $P\cap Q\neq\emptyset$.} In this case $\Phi(e_Pe_Q)=\Phi(0)=0$. Moreover, if $i\in P\cap Q$, then $(f_P^*)_i(f_Q^*)_i=0$, and therefore $f_P^*f_Q^*=0$ as well. This concludes the proof that $\Phi$ is a map of DG algebras.

Now, to show that \( \Phi \) is a map of DG \(\Gamma\)-algebras, it suffices to show that $\Phi(e_\sigma^{(i)})=\Phi(e_\sigma)^{(i)}$ for all $i$'s.  If \( i = 0,1 \), then the desired equality follows immediately from the first axiom defining $\Gamma$-algebras.
So we will assume that \( i \geq 2 \). Avramov’s proof of \cite[Lemma 6]{Avramov} shows that divided powers vanish
\[
e_\sigma^{(i)} = 0\quad\text{for all }i\geq2.
\]
Therefore $\Phi(e_\sigma^{(i)}) = \Phi(0) = 0$. Thus, it remains to show
\[
\Phi(e_\sigma)^{(i)}=(f_\sigma^*)^{(i)}=0
\]
Recall that $f_\sigma^*=g_1*\cdots*g_r$, and assume that $j$ is the smallest index such that $|g_j|>0$. Then
\[
(f_\sigma^*)^{(i)}=(g_1*\cdots*g_r)^{(i)}=g_1^i*\cdots* g_j^{(i)}*\cdots g_r^i.
\]
Since $|g_j|>0$ it follows that $g_j=f_j$. Since the resolution $F_j$ is zero in degree $\geq2$, it follows that $g_j^{(i)}=0$, and therefore $(f_\sigma^*)^{(i)}=0$. This concludes the proof that $\Phi$ respects divided powers.
\end{proof}

\section{Some Applications}
In this section we include some applications of our main result. 
\subsection{Homotopy Lie Algebras} The first application concerns homotopy Lie algebras, see \cite[Chapter 10]{IFR} for the definition and basic properties. If $I$ is a monomial ideal of $R$, we denote the homotopy Lie algebra of $R/I$ by $\pi(R/I)$. If $D$ is a DG $\Gamma$-algebra such that $D_0$ is noetherian and the $D_0$-module $\mathrm{H}_n(D)$ is finite for every $n\in\mathbb{Z}$, we denote the homotopy Lie algebra of $D$ by $\pi(D)$, see \cite[Remark 7]{Avramov} for more details. In the proof of \cite[Theorem 1]{Avramov}, Avramov proves that if $I$ is a monomial ideal of $R$ minimally generated by monomials of degree at least 2, then $\pi^{\geq2}(R/I)\cong\pi(T_\bullet\otimes_R\kk)$, where $T_\bullet$ is the Taylor resolution of $I$. The next proposition generalizes Avramov's result.

\begin{proposition}
Let $I_1,\ldots,I_r$ be monomial ideals of $R$ minimally generated by monomials of degree at least 2. Let $F_i$ be a DG $\Gamma$-algebra resolution of $R/I_i$ for all $i$'s. Then, there is an isomorphism of graded Lie algebras
\[
\pi^{\geq2}(R/I)\cong\pi((F_1*\cdots*F_r)\otimes_R\kk).
\]
\end{proposition}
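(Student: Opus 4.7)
The plan is to mimic Avramov's argument in the proof of \cite[Theorem 1]{Avramov}, replacing the Taylor resolution by the generalized Taylor resolution $F := F_1 * \cdots * F_r$. The only additional ingredient required is the main theorem of Section 3, which guarantees that $F$ is a DG $\Gamma$-algebra resolution of $R/I$ where $I = I_1 + \cdots + I_r$.

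First, I would record that the degree hypothesis on the $I_i$ yields $I \subseteq \m^2$, where $\m = (x_1,\ldots,x_n)$. This containment is the input Avramov exploits to ensure that the positive-degree part of the homotopy Lie algebra $\pi(R/I)$ is captured entirely by $\pi^{\geq2}(R/I)$ and that the construction outlined below produces the correct object.

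Next, I would revisit Avramov's construction in the Taylor-resolution case: starting from the DG $\Gamma$-algebra resolution $T$ of $R/I$, he extends $T$ to an acyclic closure of $\kk$ over $R/I$ by adjoining divided-power variables in appropriate degrees, and then reads off $\pi^{\geq2}(R/I)$ from the generators adjoined, which, by the general theory of acyclic closures, equals $\pi(T \otimes_R \kk)$ as a graded Lie algebra. I would then inspect this construction and verify that it uses only two features of $T$: that $T$ is a DG $\Gamma$-algebra resolution of $R/I$ with $T_0 = R$, and that $I \subseteq \m^2$. Since $F$ enjoys both properties --- the first by the main theorem of this paper, and the second by the degree hypothesis --- the same construction applied to $F$ yields an acyclic closure of $\kk$ over $R/I$, and therefore produces the desired isomorphism $\pi^{\geq2}(R/I) \cong \pi((F_1 * \cdots * F_r) \otimes_R \kk)$ of graded Lie algebras.

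The hard part will be verifying that Avramov's argument is genuinely insensitive to the Taylor resolution's specific combinatorial structure --- that is, that no property beyond being a DG $\Gamma$-algebra resolution of $R/I$ with $I \subseteq \m^2$ is actually used. This amounts to carefully tracing through \cite[proof of Theorem 1]{Avramov} and pointing to each spot where the Taylor resolution is invoked, confirming in each case that the Taylor resolution enters only as a convenient DG $\Gamma$-algebra resolution of $R/I$, so that $F$ may be substituted without changing the conclusion.
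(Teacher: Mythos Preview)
Your plan is reasonable and would likely succeed, but the paper takes a different and more direct route. Rather than re-examining the proof of \cite[Theorem 1]{Avramov} and checking that no Taylor-specific features are used, the paper writes down a concrete zig-zag of quasi-isomorphisms of DG $\Gamma$-algebras
\[
K^{R/I} = R/I\otimes_R K^R \xleftarrow{\;\tau\otimes K^R\;} (F_1*\cdots*F_r)\otimes_R K^R \xrightarrow{\;(F_1*\cdots*F_r)\otimes\varepsilon\;} (F_1*\cdots*F_r)\otimes_R\kk,
\]
where $K^R$ is the Koszul complex on the variables of $R$ and $\tau,\varepsilon$ are the augmentations; it then invokes \cite[Lemma 8]{Avramov} (not the proof of Theorem 1) for the identification $\pi^{\geq2}(R/I)\cong\pi(K^{R/I})$, together with the general principle that quasi-isomorphic DG $\Gamma$-algebras have isomorphic homotopy Lie algebras. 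This sidesteps entirely the ``hard part'' you flag: Avramov's results are used as black boxes, and the only property of $F_1*\cdots*F_r$ needed is that it is a DG $\Gamma$-algebra resolution of $R/I$, which the main theorem supplies. Your approach buys a closer parallel to Avramov's original argument, but at the cost of the line-by-line verification you acknowledge remains to be done.
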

\begin{proof}
Let $K^R$ be the Koszul complex over the variables of $R$. Let $\varepsilon:K^R\rightarrow\kk$ and $\tau:F_1*\cdots*F_r\rightarrow R/I$ be the augmentation maps of $K^R$ and $F_1*\cdots*F_r$. These maps induce the quasi-isomorphisms below
\[
K^{R/I}= R/I\otimes_RK^R\xleftarrow{\;\;\tau\otimes K^R\;\;} (F_1*\cdots*F_r)\otimes_R K^R\xrightarrow{\;\; (F_1*\cdots*F_r)\otimes\varepsilon\;\;}(F_1*\cdots*F_r)\otimes_R\kk,
\]
where $K^{R/I}$ is the Koszul complex over the images of the variables of $R$ in $R/I$ and the equality follows since $I$ is generated by monomials of degree at least 2. The Koszul complex $K^{R/I}$ is a DG $\Gamma$-algebra and it was proved in \cite[Lemma 8]{Avramov} that $\pi^{\geq2}(R/I)\cong\pi(K^{R/I})$. Since quasi-isomorphic DG $\Gamma$-algebras have isomorphic homotopy Lie algebras the desired result now follows.
\end{proof}

\subsection{Squarefree monomial ideals}
In this subsection we complement a result of VandeBogert \cite[Proposition 4.8]{Keller}, which was a generalization of a result of Katth\"{a}n \cite[Theorem 3.6]{katthan}.

\begin{remark}We start by recalling the definition of the algebraic Scarf complex. The algebraic Scarf complex is the subcomplex of the Taylor resolution generated by all the basis elements having unique multidegree. It is known that the Scarf complex is always a subcomplex of the minimal free resolution of a monomial ideal. For more details about the Scarf complex see \cite[Section 6.2]{combCommAlg}.
\end{remark}

\begin{remark}\label{rmk:sqf}
If $F$ is a multigraded free $R$-module whose generators have squarefree multidegree, then for every multihomogeneous $f\in F$ there exists a monomial $u_f\in R$ and a multihomogeneous element of $F$ denoted by $|f|_{\sqf}$ such that $f=u_f|f|_\sqf$ and the multidegree of $|f|_\sqf$ is $(m_f,x_1\cdots x_n)$. The monomial $u_f$ and the element $|f|_\sqf$ are unique. See \cite[Lemma 1.4]{katthan}.

If $I$ is a squarefree monomial ideal of $R$ and $F_\b$ is a minimal DG algebra resolution of $R/I$, then every multihomogeneous element $f\in F$ of positive homological degree and squarefree multidegree can be written as
\[
f=\sum_{j=1}^l|g_{1,j}\cdots g_{|f|,j}|_\sqf
\]
where $g_{1,j},\ldots,g_{|f|,j}$ have homological degree 1 for all $j=1,\ldots,l$. See \cite[Proposition 3.4]{katthan} for a proof. This shows that a minimal DG algebra resolution of a squarefree monomial ideal is essentially generated in homological degree 1.
\end{remark}

\begin{theorem}
Let $I_1,\ldots,I_r$ be squarefree monomial ideals. Let $F_i$ be a multigraded DG $\Gamma$-algebra minimal free resolution of $R/I_i$ for $i=1,\ldots,r$. If the minimal free resolution $G_\b$ of $R/(I_1+\cdots +I_r)$ is a multigraded DG $\Gamma$-algebra, then there exists a DG $\Gamma$-ideal $J$ of $F_1*\cdots*F_r$ such that $G_\b\cong (F_1*\cdots *F_r)/J$ as multigraded DG $\Gamma$-algebras.
\end{theorem}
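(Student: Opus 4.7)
The plan is to invoke VandeBogert's \cite[Proposition~4.8]{Keller} to obtain a multigraded DG algebra surjection $\psi\colon F_1*\cdots*F_r \twoheadrightarrow G_\b$, whose kernel $J$ is automatically a DG ideal. The goal then reduces to showing that $\psi$ also respects divided powers, since then $J=\ker\psi$ is closed under divided powers, yielding the desired isomorphism $G_\b \cong (F_1*\cdots*F_r)/J$ of multigraded DG $\Gamma$-algebras.

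To verify that $\psi$ preserves divided powers, first use axioms 3 and 4 of \Cref{DGGammaDef} to reduce to checking $\psi(b^{(k)})=\psi(b)^{(k)}$ on a basis element $b=f_1*\cdots*f_r$ of even positive total degree. Using the identity $\iota_1(f_1)\cdots\iota_r(f_r)=(m_{f_1},\ldots,m_{f_r})\,b$ established in the proof of \Cref{thm:DGA}, together with torsion-freeness of the free $R$-modules involved, both $b^{(k)}$ and $\psi(b)^{(k)}$ can be rewritten in terms of divided powers of products of the $\iota_i(f_i)$'s, respectively of the $\psi(\iota_i(f_i))$'s. Split into two cases: if some $|f_i|$ is odd, pair up the odd-degree factors and use the DG algebra axiom $a^2=0$ for $|a|$ odd to conclude that the resulting squared product vanishes, so both $b^{(k)}=0$ and $\psi(b)^{(k)}=0$. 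If every $|f_i|$ is even, iterated application of axiom 4 reduces the problem to showing $\psi(\iota_j(f_j^{(k)}))=\psi(\iota_j(f_j))^{(k)}$ in $G_\b$, where $j$ is the smallest index with $|f_j|>0$.

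For this last step, apply Katth\"an's decomposition from \Cref{rmk:sqf} to the basis element $f_j$ of the minimal resolution $F_j$: write $f_j=\sum_s |g_{1,s}\cdots g_{n,s}|_{\sqf}$ with each $g_{i,s}\in F_j$ of homological degree $1$ and $n=|f_j|$. Iterating axiom 4 (pairing up the odd-degree factors $g_{i,s}$) shows that each summand has vanishing $k$-th divided power for $k\geq 2$, so $f_j^{(k)}$ equals the $k$-th elementary symmetric polynomial in these summands. Since $\psi\circ\iota_j$ is a multigraded DG algebra map, the image of each $|g_{1,s}\cdots g_{n,s}|_{\sqf}$ has the correct squarefree multidegree and hence is of the form $|\psi(\iota_j(g_{1,s}))\cdots\psi(\iota_j(g_{n,s}))|_{\sqf}$; this gives a Katth\"an decomposition of $\psi(\iota_j(f_j))$ in $G_\b$, and the same elementary symmetric polynomial computes $\psi(\iota_j(f_j))^{(k)}$. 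Comparing yields the desired identity. The main obstacle will be carefully tracking how the operation $|\cdot|_{\sqf}$ interacts with $\psi\circ\iota_j$ and justifying the torsion-freeness cancellations needed to transfer identities from divided powers of the products $\iota_1(f_1)\cdots\iota_r(f_r)$ down to divided powers of the basis element $b$ itself.
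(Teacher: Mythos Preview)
Your proposal is correct and follows essentially the same route as the paper: both invoke VandeBogert's surjection $\psi$, reduce the divided-power compatibility to the single-factor case $\psi((f_j^*)^{(k)})=\psi(f_j^*)^{(k)}$, and settle that case via Katth\"an's degree-one decomposition together with the vanishing $(g_1\cdots g_n)^{(k)}=0$ for $k\ge 2$, which turns the $k$-th divided power into an elementary symmetric expression preserved by the multiplicative map $\psi$. The only differences are cosmetic---the paper computes directly with the explicit formula $\psi(f_1*\cdots*f_r)=|\psi(f_1^*)\cdots\psi(f_r^*)|_{\sqf}$, whereas you phrase the same reduction through the identity $\iota_1(f_1)\cdots\iota_r(f_r)=(m_{f_1},\ldots,m_{f_r})\,b$ and torsion-freeness; note also that in the odd case the cleanest justification is axiom~4 (take $x=\psi(\iota_i(f_i))$ of odd degree and $y$ the remaining product, which is then also odd) rather than the $a^2=0$ argument you sketch.
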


\begin{proof}
Set $F_\b=F_1*\cdots*F_r$. We assume the resolutions $F_1,\ldots,F_r,G_\b$ have a fixed basis. In the proof of \cite[Proposition 4.8]{Keller}, VandeBogert constructs a surjective DG algebra map $\psi:F_\b\rightarrow G_\b$. We recall this construction and show that $\psi$ preserves divided powers.

If $f_i\in F_i$, we denote by $f_i^*$ the element of $F_\b$ with $f_i$ in position $i$ and 1 everywhere else. We first notice that if $f_i\in F_i$ is of squarefree multidegree for all $i$, it follows from \Cref{thm:DGA} that
\[
f_1*\cdots*f_r=\frac{f_1^*\cdots f_r^*}{(m_{f_1},\ldots,m_{f_r})}=|f_1^*\cdots f_r^*|_\sqf.
\]
We now define $\psi$ in homological degree 1. If $g_i\in F_i$ is a basis element of homological degree 1, then we set $\psi(g_i^*)=\tilde{g}_i$ where $\tilde{g}_i$ is the element of the algebraic Scarf complex (as a subcomplex of $G_\b$) corresponding to $g_i$. We then extend by $R$-linearity.

If $f_i\in F_i$ is a basis element (and therefore of squarefree multidegree) of positive homological degree, then by \Cref{rmk:sqf}
\[
f_i=\sum_{j=1}^l|g_{1,j}\cdots g_{|f_i|,j}|_\sqf,
\]
where $g_{1,j},\ldots,g_{|f_i|,j}$ are elements of $F_i$ of homological degree 1 for all $j=1,\ldots,l$. We define 
\[
\psi(f_i^*)=\sum_{j=1}^l|\psi(g_{1,j}^*)\cdots \psi(g_{|f_i|,j}^*)|_\sqf,
\]
and extend by $R$-linearity.

If $f_1\in F_1,\ldots, f_r\in F_r$ are basis elements, then we define
\[
\psi(f_1*\cdots*f_r)=\frac{\psi(f_1^*)\cdots\psi(f_r^*)}{(m_{f_1},\ldots,m_{f_r})}=|\psi(f_1^*)\cdots\psi(f_r^*)|_\sqf,
\]
and extend by $R$-linearity.


Now we show that $\psi$ preserves divided powers. Let $f_i\in F_i$ be a basis element of even and positive homological degree. Then by \Cref{rmk:sqf}
\[
f_i=\sum_{j=1}^l|g_{i,1,j}\cdots g_{i,|f_i|,j}|_\sqf
\]
for some elements of homological degree 1 $g_{i,1,j},\ldots,g_{i,|f_i|,j}$ for $j=1,\ldots,l$. By the axioms of DG $\Gamma$-algebras one has the following equality
\begin{equation}\label{eq:gamma}
f_i^{(m)}=\sum_{j_1+\cdots+j_l=m}|g_{i,1,j_1}\cdots g_{i,|f_i|,j_1}|_\sqf^{(j_1)}\cdots|g_{i,1,j_l}\cdots g_{i,|f_i|,j_l}|_\sqf^{(j_l)}.
\end{equation}
If $h\in F_i$ is of even positive homological degree then, since $h=u_h|h|_\sqf$, it follows that $h^{(m)}=0$ if and only if $|h|_\sqf^{(m)}=0$. We now note that by the axioms of divided powers algebra
\begin{equation}\label{eq:zeroPw}
(g_{i,1,j_k}\cdots g_{i,|f_i|,j_k})^{(j_k)}=0\quad\mathrm{for\;}j_k\geq2,\quad\mathrm{and\;}k=1,\ldots,l.
\end{equation}
Therefore \Cref{eq:gamma} can be rewritten as
\[
f_i^{(m)}=\sum_{\substack{\sigma\subseteq[m]\\|\sigma|=l}}\prod_{j\in\sigma}|g_{i,1,j}\cdots g_{i,|f_i|,j}|_\sqf.
\]
Therefore
\begin{align*}
\psi((f_i^*)^{(m)})&=\sum_{\substack{\sigma\subseteq[m]\\|\sigma|=l}}\prod_{j\in\sigma}\psi(|g_{i,1,j}^*\cdots g_{i,|f_i|,j}^*|_\sqf)\\
&=\sum_{\substack{\sigma\subseteq[m]\\|\sigma|=l}}\prod_{j\in\sigma}|\psi(g_{i,1,j}^*)\cdots \psi(g_{i,|f_i|,j}^*)|_\sqf.
\end{align*}
While
\begin{align*}
\psi(f_i^*)^{(m)}&=\left(\sum_{j=1}^l\psi(|g_{i,1,j}^*\cdots g_{i,|f_i|,j}^*|_\sqf)\right)^{(m)}\\
&=\left(\sum_{j=1}^l|\psi(g_{i,1,j}^*)\cdots \psi(g_{i,|f_i|,j}^*)|_\sqf\right)^{(m)}\\
&=\sum_{j_1+\cdots+j_l=m}|\psi(g_{i,1,j_1}^*)\cdots \psi(g_{i,|f_i|,j_1}^*)|_\sqf^{(j_1)}\cdots|\psi(g_{i,1,j_l}^*)\cdots \psi(g_{i,|f_i|,j_l}^*)|_\sqf^{(j_l)}\\
&=\sum_{\substack{\sigma\subseteq[m]\\|\sigma|=l}}\prod_{j\in\sigma}|\psi(g_{i,1,j}^*)\cdots \psi(g_{i,|f_i|,j}^*)|_\sqf,
\end{align*}
where the last equality follows from \Cref{eq:zeroPw}. This shows $\psi((f_i^*)^{(m)})=\psi(f_i^*)^{(m)}$ for all $m$.

Finally if $f_1\in F_1,\ldots, f_r\in F_r$ are basis elements of even degree, and $j$ is the smallest index such that $|f_j|>0$ and $k$ any positive integer, then
\begin{align*}
\psi((f_1*\cdots*f_r)^{(k)})&=\psi(f_1^k*\cdots*f_j^{(k)}*\cdots*f_r^k)\\
&=\frac{\psi((f_1^*)^k)\cdots\psi((f_j^*)^{(k)})\cdots\psi((f_r^*)^k)}{(m_{f_1^k},\ldots,m_{f_j^{(k)}},\ldots,m_{f_r^k})}\\
&=\frac{\psi(f_1^*)^k\cdots\psi(f_j^*)^{(k)}\cdots\psi(f_r^*)^k}{(m_{f_1},\ldots,m_{f_r})^k}\\
&=\left(\frac{\psi(f_1^*)\cdots\psi(f_j^*)\cdots\psi(f_r^*)}{(m_{f_1},\ldots,m_{f_r})}\right)^{(k)}\\
&=\psi(f_1*\cdots*f_r)^{(k)}.\qedhere
\end{align*}
\end{proof}

\begin{remark}
We point out that if the minimal free resolution of $R/I$ with $I$ squarefree has a DG algebra structure, then it has a divided powers algebra structure. Indeed, if $f$ is a basis element of the resolution of even positive homological degree, then
\[
f=\sum_{j=1}^l|g_{1,j}\cdots g_{|f|,j}|_\sqf,
\]
with $g_{1,j},\ldots,g_{|f|,j}$ of homological degree 1 for all $j$. The divided power $(g_{1,j}\cdots g_{|f|,j})^{(m)}$ needs to be defined as 0 for $m\geq2$ to satisfy the axioms of divided powers algebra. It follows that $|g_{1,j}\cdots g_{|f|,j}|_\sqf^{(m)}=0$ for $m\geq2$, and following the axioms one needs to set
\[
f^{(m)}=\sum_{\substack{\sigma\subseteq[m]\\|\sigma|=l}}\prod_{j\in\sigma}|g_{1,j}\cdots g_{|f_i|,j}|_\sqf.
\]
Then one can extend these divided powers to linear combinations of basis elements by using the axioms of DG $\Gamma$-algebra.
\end{remark}

In \cite[Proposition 3.1]{katthan}, Katth\"{a}n proved that if the minimal free resolution of $R/I$, where $I$ is a squarefree ideal, is a DG algebra, then the product induced on the Scarf subcomplex is unique and it matches the product on the Taylor resolution. We complement this result by showing that if this resolution admits the structure of a DG $\Gamma$-algebra, then the divided powers structure on the Scarf subcomplex matches the DG $\Gamma$-structure of the Taylor resolution. If $I$ is a monomial ideal generated by $r$ elements, then we fix basis elements of the Scarf complex $e_\sigma$ with $\sigma\subseteq[r]$ corresponding to the standard basis elements of the Taylor resolution.

\begin{proposition}
Let $I$ be a squarefree monomial ideal of $R$. Let $F_\b$ be a minimal DG $\Gamma$-algebra free resolution of $R/I$. If $e_\sigma$ is a basis element of the Scarf complex (as a subcomplex of $F_\b$) corresponding to a standard basis element of the Taylor resolution, then $e_\sigma^{(m)}=0$ for all $m\geq2$.
\end{proposition}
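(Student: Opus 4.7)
The plan combines the squarefree decomposition of \Cref{rmk:sqf} with the Scarf uniqueness of $e_\sigma$ and Katth\"an's product formula \cite[Proposition 3.1]{katthan}. Since $e_\sigma$ has squarefree multidegree $m_\sigma$ and even positive homological degree $|\sigma|$ (so that its divided powers are defined), \Cref{rmk:sqf} yields a decomposition
\[
e_\sigma=\sum_{j=1}^{l}x_j,\qquad x_j:=|g_{1,j}\cdots g_{|\sigma|,j}|_\sqf,
\]
with each $g_{i,j}\in F_1$ of homological degree $1$. Writing $g_{1,j}\cdots g_{|\sigma|,j}=g_{1,j}\cdot(g_{2,j}\cdots g_{|\sigma|,j})$ as a product of two elements of odd homological degree (using that $|\sigma|-1$ is odd, since $|\sigma|$ is even), axiom 4 from \Cref{DGGammaDef} forces $(g_{1,j}\cdots g_{|\sigma|,j})^{(n)}=0$ for $n\geq 2$, and hence $x_j^{(n)}=0$ for $n\geq 2$.

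Applying axiom 3 iteratively to $e_\sigma=\sum_j x_j$, exactly as in the remark preceding the proposition, gives
\[
e_\sigma^{(m)}=\sum_{\substack{\tau\subseteq[l]\\|\tau|=m}}\prod_{j\in\tau}x_j.
\]
I would next use the Scarf property to identify each $x_j$ with a scalar multiple of $e_\sigma$. Each $x_j$ is a multihomogeneous element of $F_{|\sigma|}$ of multidegree $m_\sigma$; the uniqueness of $e_\sigma$ as the Taylor basis element with multidegree $m_\sigma$ gives $\beta_{|\sigma|,m_\sigma}(R/I)=1$, and combined with the minimality of $F_\b$ and the squarefree form of $x_j$, one concludes $x_j=\lambda_j e_\sigma$ for some $\lambda_j\in\kk$. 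Substituting yields
\[
e_\sigma^{(m)}=\Bigl(\sum_{\substack{\tau\subseteq[l]\\|\tau|=m}}\prod_{j\in\tau}\lambda_j\Bigr)\,e_\sigma^{m}.
\]

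Finally, by Katth\"an's \cite[Proposition 3.1]{katthan} the product on the Scarf subcomplex of $F_\b$ coincides with the Taylor product; in particular $e_\sigma\cdot e_\sigma=0$, since $\sigma\cap\sigma\neq\emptyset$, and so $e_\sigma^{m}=0$ for every $m\geq 2$. Therefore $e_\sigma^{(m)}=0$ for all $m\geq 2$, as desired. The main obstacle is the step identifying $x_j=\lambda_j e_\sigma$: a priori, the $\kk$-space of multihomogeneous elements of $F_{|\sigma|}$ with multidegree $m_\sigma$ can contain monomial multiples of basis elements whose squarefree multidegree strictly divides $m_\sigma$, and such contributions must be ruled out. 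The key should be that $x_j$ is a squarefree-ification of a product of $|\sigma|$ elements in $F_1$, so that the multigraded constraints coming from the Scarf hypothesis pin $x_j$ down to the line $\kk\cdot e_\sigma$.
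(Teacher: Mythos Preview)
Your approach is more elaborate than needed and the step you flag as ``the main obstacle'' is a genuine gap: there is no reason in general why each $x_j$ must lie in $\kk\cdot e_\sigma$. A multihomogeneous element of $F_{|\sigma|}$ of squarefree multidegree $m_\sigma$ can perfectly well involve $(m_\sigma/m_\tau)e_\tau$ for any basis element $e_\tau$ in homological degree $|\sigma|$ with $m_\tau$ a proper squarefree divisor of $m_\sigma$; the Scarf hypothesis on $\sigma$ says nothing about such $\tau$. Your suggested fix (``multigraded constraints coming from the Scarf hypothesis pin $x_j$ down'') does not supply an argument.

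The paper bypasses this entirely by using Katth\"an's result more aggressively. You invoke \cite[Proposition~3.1]{katthan} only at the end, to extract $e_\sigma^2=0$; the paper instead uses it to obtain the full Taylor product identity
\[
\prod_{i\in\sigma} e_{\{i\}}=\pm d_\sigma\, e_\sigma
\]
for some monomial $d_\sigma$, since all singletons $e_{\{i\}}$ lie in the Scarf complex. Now $\prod_{i\in\sigma}e_{\{i\}}$ is a product of two odd-degree elements, so axiom~4 gives $\bigl(\prod_{i\in\sigma}e_{\{i\}}\bigr)^{(m)}=0$ for $m\geq 2$, hence $d_\sigma^m e_\sigma^{(m)}=0$, and torsion-freeness finishes. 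This is a two-line argument and requires neither the squarefree decomposition of \Cref{rmk:sqf} nor any identification of the summands $x_j$.
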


\begin{proof}
We first notice that the claim is true when $|\sigma|=1$. It follows from \cite[Proposition 3.1]{katthan} that the basis elements of the Scarf complex multiply exactly as they would in the Taylor resolution. Therefore
\[
\prod_{i\in\sigma}e_i=\pm d_\sigma e_\sigma
\]
for some monomial $d_\sigma$. Therefore
\[
0=\left(\prod_{i\in\sigma}e_i\right)^{(m)}=\pm d_{\sigma}^me_{\sigma}^{(m)},
\]
yielding the desired result.
\end{proof}

\bibliographystyle{amsplain}
\bibliography{biblio}

\end{document}